\theoremstyle{plain}
\newtheorem{thm}{Theorem}[section]
\newtheorem{Example}{Example}[section]
\newtheorem{note}{Note}[section]
\theoremstyle{definition}
\newtheorem{defn}{Definition}[section]
\newtheorem{rem}{Remark}[section]
\numberwithin{equation}{section}
\begin{document}

\setcounter {page}{1}
\title[$\mathcal{I}$-convergence in metric-like spaces]
{ $\mathcal{I}$-convergence of sequences in metric-like spaces }

\author[ P. Malik, S. Das ]{Prasanta Malik* and Saikat Das*\ }
\newcommand{\acr}{\newline\indent}
\maketitle
\address{{*\,} Department of Mathematics, The University of Burdwan, Golapbag, Burdwan-713104,
West Bengal, India.
                Email: pmjupm@yahoo.co.in, dassaikatsayhi@gmail.com \acr
           }

\maketitle

\textbf{AMS subject classification :} Primary 40A35; Secondary 54A20.

\textbf{keywords}: Ideal, filter, $\mathcal{I}$-convergence, partial metric space, metric-like space.

\begin{abstract}
In this paper we introduce and study the notion of $\mathcal{I}$-convergence of sequences in 
a metric-like space, where $\mathcal{I}$ is an ideal of subsets of the set $\mathbb{N}$ of all 
natural numbers. Further introducing the notion of $\mathcal{I}^{*}$-convergence 
of sequences in a metric-like space we study its relationship with $\mathcal{I}$-convergence.
\end{abstract}

\section{\bf{Introduction and background}}

The notion of convergence of sequences of real numbers was extended to the 
notion of statistical convergence by Fast \cite{Fa} (and also independently by Schoenberg \cite{Sc} ) with the 
concept of natural density. A lot of work have been done in this direction, after the work of 
Salat \cite{Sl} and Fridy \cite{Fr1}. For more primary works in this line one can see  \cite{ Fr2, Ko1} etc.

The notion of statistical convergence of sequences of real numbers further extended to the notion
of $\mathcal{I}$-convergence by Kostyrko et al. \cite{Ko2}, with the notion of an ideal $\mathcal{I}$ of subsets 
of the set of all natural numbers $\mathbb{N}$. For more works in this direction one can see \cite{ Das, Dm, Lahiri1, Lahiri2}.

On the other hand the concept of partial metric space was first introduced by Matthews \cite{Matt},
as a generalization of the usual notion of metric space. In \cite{Am}, A. A. Harandi introduced the concept of metric-like space which is a 
generalization of the concepts of metric space as well as partial metric space and studied the 
notions of convergence and Cauchyness of sequences in a metric-like space. For more works on partial metric spaces and metric-like spaces one can see
\cite{Al, Buk, Hoss, Ol}.

In \cite{Nu} F. Nuray introduced and studied the notion of statistical convergence in a partial metric space
and in \cite{Gull} E. G$\ddot{u}$lle et al. extended this notion to $\mathcal{I}$-convergence in partial
metric setting. Recently in \cite{Malik} Malik et al. has introduced and studied the notion of statistical 
convergence of sequences in a metric-like space. It seems therefore reasonable to introduce
and study the notion of $\mathcal{I}$-convergence of sequences in a metric-like space. In this paper we do the same
and studied some basic properties of $\mathcal{I}$-convergence of sequences in a metric-like space. Further introducing 
the notions of $\mathcal{I}^{*}$-convergence in a metric-like space we have examined its relationship with $\mathcal{I}$
-convergence.    


\section{\bf{Basic Definitions and Notations}}

In this section we recall some basic definitions and notations from the literature, which is required
throughout our study. Throughout this paper $\mathbb{R}$ denotes the set of all real numbers,
$\mathbb{R}_{\geq 0}$ denotes the set of all non-negative real numbers.

\begin{defn} \cite{Matt} \label{ defn 1}
Let $\mathcal{X}$ be a non-empty set and a mapping $p: \mathcal{X \times X} \rightarrow \mathbb{R}_{\geq 0}$ is
said to be a partial metric on $\mathcal{X}$ if for any $x, y, z \in \mathcal{X}$, the following conditions
are satisfied:
\begin{itemize}
\item[$(p1)$] $x = y$ if and only if $p(x,x) = p(x,y) = p(y,y)$;
\item[$(p2)$] $0 \leq p(x,x) \leq p(x,y)$;
\item[$(p3)$] $p(x,y) = p(y,x)$;
\item[$(p4)$] $p(x,y) \leq p(x,z) + p(z,y) - p(z,z)$. 
\end{itemize}

The pair $(\mathcal{X},p)$ is then called a partial metric space.
\end{defn}

\begin{defn} \cite{Am} \label{ defn 2}
Let $\mathcal{X}$ be a non-empty set. A mapping $\delta: \mathcal{X \times X}\rightarrow \mathbb{R}_{\geq 0}$
is said to be a metric-like on $\mathcal{X}$ if for any $x, y, z \in \mathcal{X}$, the following conditions
are satisfied:
\begin{itemize}
\item[$(\delta1)$] $\delta(x,y) = 0 ~\Rightarrow~ x = y$;
\item[$(\delta2)$] $\delta(x,y) = \delta(y,x)$;
\item[$(\delta3)$] $\delta(x,y) \leq \delta(x,z) + \delta(z,y)$.
\end{itemize}  

The pair $(\mathcal{X},\delta)$ is then called a metric-like space. Note that in a metric-like space $(\mathcal{X}
,\delta), ~\delta(x,x)$ may be positive for some $x \in \mathcal{X}$.
\end{defn}

\begin{rem}
We see from Definition \ref{ defn 1} and Definition \ref{ defn 2} that every metric space is a partial metric
space and every partial metric space is a metric-like space, but the converses are not true. 
\end{rem}

\begin{defn} \cite{Am}:
Let $x_{o}$ be a point in a metric-like space $(\mathcal X,\delta)$ and let $\epsilon > 0$.
Then the open $\delta$-ball with centered at $x_{o}$ and radius $\epsilon > 0$ in $(\mathcal X,\delta)$
is denoted by $\mathcal{B}_{\delta}(x_{o};\epsilon)$ and is defined by
\begin{align*}
\mathcal{B}_{\delta}(x_{o};\epsilon)= \{ x \in \mathcal{X}: \left|\delta(x,x_{o})- 
\delta(x_{o},x_{o})\right|< \epsilon \}.
\end{align*} 
\end{defn}

\begin{defn} \cite{Am}:
A sequence $\{x_{n}\}_{n\in \mathbb N}$ in a metric-like space $(\mathcal X,\delta)$ is said to be 
convergent to a point $x_{o} (\in \mathcal X)$, if for every $\epsilon > 0$, there exists $k_{o} 
\in \mathbb N$ such that
\begin{align*}
&\left|\delta(x_{n}, x_{o})- \delta(x_{o}, x_{o})\right|< \epsilon,  &\forall~ n\geq k_{o}\\
\mbox{i.e.}~ & x_{n} \in \mathcal{B}_{\delta}(x_{o};\epsilon),  &\forall~ n\geq k_{o}.
\end{align*}
In this case, we write $\displaystyle{\lim_{n\rightarrow \infty}}x_{n} = x_{o}$.
\end{defn}

We now recall the concept of statistical convergence and $\mathcal{I}$-convergence of real sequences.

\begin{defn} \cite{Fr1}:
Let $\mathcal P$ be a subset of $\mathbb N$. For each $n \in \mathbb{N}$, let $\mathcal{P}(n)$ 
denote the cardinality of the set $\{k \leq n: k \in \mathcal{P}\}$. We say that the set $\mathcal{P}$ 
has natural density $d(\mathcal{P})$ if the limit $\displaystyle{\lim_{n\rightarrow \infty}}\mathcal{P}(n)/n$
exists finitely and
\begin{align*}
d(\mathcal{P}) = \displaystyle{\lim_{n\rightarrow \infty}} \frac{\mathcal{P}(n)}{n}.
\end{align*}
\end{defn}

\begin{defn} \cite{Fr1}:
Let $\{x_{n}\}_{n\in \mathbb N}$ be a sequence of real numbers. Then $\{x_{n}\}_{n\in \mathbb N}$
is said to be statistically convergent to $x_{o} (\in \mathbb R)$, if for any $\epsilon > 0,~ d(\mathcal
A(\epsilon ))= 0$, where
\begin{align*}
\mathcal A(\epsilon)= \left\{n\in \mathbb N: \left|x_{n} - x_{o}\right|\geq \epsilon\right\}.
\end{align*}
In this case we write $st-\displaystyle{\lim_{n\rightarrow \infty}}x_{n}= x_{o}.$
\end{defn}

\begin{defn} \cite{Ko2}: 
Let $\mathcal{D}$ be a non-empty set. A non-empty class $\mathcal I$ of subsets of $\mathcal D$ is said to be an 
ideal in $\mathcal{D}$, provided $\mathcal I$ satisfies the following conditions:
\begin{itemize}
\item[$(i)$] $\phi \in\mathcal I$, 
\item[$(ii)$] if $\mathcal{A}, \mathcal{B} \in \mathcal I$ then $\mathcal{A}\cup \mathcal{B} \in\mathcal I$, and 
\item[$(iii)$] if $\mathcal{A}\in \mathcal I$ and $\mathcal{B}\subset \mathcal{A}$ then $\mathcal{B}\in \mathcal I$.
\end{itemize}
\end{defn}
 
An ideal $\mathcal{I}$ in $\mathcal{D}~ (\neq \phi)$ is called non-trivial if $\mathcal{D} \notin \mathcal{I}$ 
and $\mathcal{I} \neq \{\phi\}$.

A non-trivial ideal $\mathcal{I}$ in $\mathcal{D}~ (\neq \phi)$ is called an admissible ideal if 
$\{z\} \in \mathcal{I}, ~\forall~ z \in\mathcal{D}$. 

Throughout the paper we take $\mathcal I$ as an admissible ideal of $\mathbb{N}$, unless otherwise mentioned.
 
\begin{defn} \cite{Ko2}: 
A non-empty class $\mathcal{F}$ of subsets of $\mathcal{D}~ (\neq \phi)$ is said to be a filter 
in $\mathcal{D}$, provided 
\begin{itemize}
\item[$(i)$] $\phi \notin \mathcal F$, 
\item[$(ii)$] if $\mathcal{A}, \mathcal{B} \in\mathcal{F}$ 
then $\mathcal{A} \cap \mathcal{B} \in\mathcal{F}$, and
\item[$(iii)$] if $\mathcal{A} \in \mathcal{F}$ and $\mathcal{B}$ is a subset of $\mathcal{D}$ such that 
$\mathcal{A} \subset \mathcal{B}$ then $\mathcal{B} \in \mathcal{F}$.
\end{itemize}
\end{defn}
  
Let $\mathcal{I}$ be a non-trivial ideal in $\mathcal{D}$. Then $\mathcal{F(I)}= \{ \mathcal{D-A} : \mathcal{A} \in\mathcal{I}\}$
forms a filter on $\mathcal{D}$, called the filter associated with the ideal $\mathcal{I}$.
 
\begin{defn} \cite{Ko2}
Let $\mathcal{I}$ be an admissible ideal in $\mathbb{N}$. Then $\mathcal{I}$ is said to satisfy the condition
\textit{(AP)} if for every countable collection of mutually disjoint sets $\{A_{j}\}_{j\in \mathbb{N}}$ belonging
to $\mathcal{I}$, there exists a countable family of sets $\{B_{j}\}_{j\in \mathbb{N}}$ such that $A_{j} \Delta B_{j}$
is a finite set for each $j\in \mathbb{N}$ and $B = \bigcup^{\infty}_{j=1} B_{j} \in \mathcal{I}$.
\end{defn}

\begin{defn} \cite{Ko2}:
A sequence $\{x_{n}\}_{n\in \mathbb{N}}$ of real numbers is said to be $\mathcal{I}$-convergent to $x \in \mathbb{R}$ if 
$\forall~ \epsilon > 0$, the set $\mathcal{A}(\epsilon)= \{n \in\mathbb{N}: \left| x_{n}- x \right|\geq \epsilon\} \in\mathcal{I}$
or in other words for each $\epsilon > 0, ~\exists ~\mathcal{B}(\epsilon)\in \mathcal{F(I)}$ such that
\begin{align*}
\left|x_{n}- x\right| < \epsilon, ~\forall~ n \in\mathcal{B}(\epsilon). 
\end{align*}

In this case we write $\mathcal{I}-\displaystyle{\lim_{n\rightarrow \infty}}x_{n}= x$.
\end{defn}

Following \cite{Malik} we now recall the notion of statistical convergence of a sequence $\{x_{n}\}_{n\in \mathbb{N}}$
in a metric-like space $(\mathcal{X},\delta)$.

\begin{defn} \cite{Malik}
Let $(\mathcal{X},\delta)$ be a metric-like space and let $\{x_{n}\}_{n\in \mathbb{N}}$ be a sequence in $\mathcal{X}$.
Then $\{x_{n}\}_{n\in \mathbb{N}}$ is said to be statistically convergent to a $x_{o} ~(\in \mathcal{X})$ if 
$st-\displaystyle{\lim_{n\rightarrow \infty}}\delta(x_{n},x_{o}) = \delta(x_{o},x_{o})$ i.e. for any
$\epsilon > 0$,
\begin{align*}
& d(\{n\in \mathbb{N}: \left|\delta(x_{n},x_{o}) - \delta(x_{o},x_{o})\right| \geq \epsilon\}) = 0 \\
\mbox{i.e.}~ & d(\{n\in \mathbb{N}: x_{n} \notin \mathcal{B}_{\delta}(x_{o};\epsilon)\}) = 0.
\end{align*}
\end{defn}


\section{\bf{$\mathcal{I}$-Convergence }}

In this section we introduce and study the notion of $\mathcal{I}$-convergence of sequences in
a metric-like space.

\begin{defn}
Let $(\mathcal{X},\delta)$ be a metric-like space and $\{x_{n}\}_{n\in \mathbb{N}}$ be a sequence in
$\mathcal{X}$. Then $\{x_{n}\}_{n\in \mathbb{N}}$ is said to be $\mathcal{I}$-convergent to $x_{o} \in
\mathcal{X}$, if $\mathcal{I}-\displaystyle{\lim_{n\rightarrow \infty}}\delta(x_{n},x_{o}) = \delta(x_{o},x_{o})$
i.e. for every $\epsilon > 0$, the set
\begin{align*}
\mathcal{A}(\epsilon) = \{n\in \mathbb{N}: \left|\delta(x_{n},x_{o}) - \delta(x_{o},x_{o})\right| \geq \epsilon\}
\in \mathcal{I}.
\end{align*}
In other words, for every $\epsilon > 0, ~\exists~ \mathcal{B}(\epsilon) \in \mathcal{F(I)}$ such that
\begin{align*}
\left|\delta(x_{n},x_{o}) - \delta(x_{o},x_{o})\right| < \epsilon, ~\forall~ n \in \mathcal{B}(\epsilon).
\end{align*}

In this case we write $\mathcal{I}-\displaystyle{\lim_{n\rightarrow \infty}}x_{n} = x_{o}$ and $x_{o}$
is called an $\mathcal{I}$-limit of $\{x_{n}\}_{n\in \mathbb{N}}$.
\end{defn}

We now cite an example, to show that $\mathcal{I}$-limit of a sequence in a metric-like space
may not be unique. 

\begin{Example} \label{ Eaxmp 1}
Let us consider a decomposition of $\mathbb{N}$ as $\mathbb{N} = \bigcup^{\infty}_{j=1} \mathcal{D}_{j}$,
where every $\mathcal{D}_{j}$ is an infinite subset of $\mathbb{N}$ and $\mathcal{D}_{i} \cap \mathcal{D}_{j}
= \phi$ for $i \neq j$. Let $\mathcal{I} = \{A \subset \mathbb{N}: A ~\mbox{intersects only finitely many}~
\mathcal{D}_{j} ~'s\}$. Then $\mathcal{I}$ is an admissible ideal of $\mathbb{N}$.

Let $\mathcal{X}$ denotes the set of all non-negative real numbers and $\delta: \mathcal{X \times X} \rightarrow
\mathbb{R}_{\geq 0}$ be defined as follows:
\begin{align*}
\delta(x,y) = \begin{cases}
0, & \mbox{if}~ x = y ~\mbox{and x is irrational in}~ \mathcal{X}  \\
2, & \mbox{if}~ x, y ~\mbox{are both integers in}~ \mathcal{X} \\
3, & \mbox{otherwise}.
\end{cases}
\end{align*}
Then $(\mathcal{X},\delta)$ is a metric-like space but neither a partial metric space
nor a metric space. We now define a sequence $\{x_{n}\}_{n\in \mathbb{N}}$ in $\mathcal{X}$ 
as follows:
\begin{align*}
x_{n} = \begin{cases}
\frac{1}{2}, & \mbox{if}~ n \in \mathcal{D}_{2} \\
j, & \mbox{if}~ n \in \mathcal{D}_{j} ~(j \in \mathbb{N}- \{2\}).
\end{cases}
\end{align*}
Then
\begin{align*}
\left|\delta(x_{n},1) - \delta(1,1)\right| = \begin{cases}
1, & \mbox{if}~ n \in \mathcal{D}_{2} \\
0, & \mbox{if}~ n \in \mathcal{D}_{j} ~(j \in \mathbb{N} - \{2\}).
\end{cases}
\end{align*}
Let $\epsilon > 0$ be given. Then
\begin{align*}
\{n \in \mathbb{N}: \left|\delta(x_{n},1) - \delta(1,1)\right| \geq \epsilon\} = \begin{cases}
\mathcal{D}_{2}, & \mbox{if}~ 0 < \epsilon \leq 1 \\
\phi, & \mbox{if}~ \epsilon > 1.
\end{cases}
\end{align*}
This implies $\{n \in \mathbb{N}: \left|\delta(x_{n},1) - \delta(1,1)\right| \geq \epsilon\} \in \mathcal{I}$
and hence $\mathcal{I}-\displaystyle{\lim_{n\rightarrow \infty}} \delta(x_{n},1) = \delta(1,1)$. So we have
$\mathcal{I}-\displaystyle{\lim_{n\rightarrow \infty}} x_{n} = 1$.

Again we have,
\begin{align*}
\left|\delta(x_{n},2) - \delta(2,2)\right| = \begin{cases}
1, & \mbox{if}~ n \in \mathcal{D}_{2} \\
0, & \mbox{if}~ n \in \mathcal{D}_{j} ~(j \in \mathbb{N} - \{2\}).
\end{cases}
\end{align*}
So, in a similar way as above,
\begin{align*}
& \{n \in \mathbb{N}: \left|\delta(x_{n},2) - \delta(2,2)\right| \geq \epsilon\} \in \mathcal{I} \\
\Rightarrow~ & \mathcal{I}-\displaystyle{\lim_{n\rightarrow \infty}} \delta(x_{n},2) = \delta(2,2) \\
\Rightarrow~ & \mathcal{I}-\displaystyle{\lim_{n\rightarrow \infty}} x_{n} = 2.
\end{align*}
Thus we see that $\mathcal{I}$-limit of the sequence $\{x_{n}\}_{n\in \mathbb{N}}$ in $(\mathcal{X},\delta)$
is not unique in a metric-like space which is different from the result in a metric space (see Proposition 3.1 \cite{Ko2}). 
\end{Example}

\begin{rem}
(i). If $\mathcal{I} = \mathcal{I}_{f} = \{A \subset \mathbb{N}: A ~\mbox{is finite}\}$, then $\mathcal{I}_{f}$
is an admissible ideal of $\mathbb{N}$ and $\mathcal{I}_{f}$-convergence of a sequence in a metric-like space coincides
with the usual convergence as introduced by A. A. Harandi \cite{Am}. 

(ii). If $\mathcal{I} = \mathcal{I}_{d} = \{A \subset \mathbb{N}: d(A) = 0\}$, then $\mathcal{I}_{d}$ is an
admissible ideal of $\mathbb{N}$ and $\mathcal{I}_{d}$-convergence of a sequence in a metric-like space coincides
with the statistical convergence as introduced by Malik et al. \cite{Malik}. 
\end{rem}

\begin{thm} \label{ Theo 1}
Let $(\mathcal{X},\delta)$ be a metric-like space and $\{x_{n}\}_{n\in \mathbb{N}}$ be a sequence in
$\mathcal{X}$. If $\{x_{n}\}_{n\in \mathbb{N}}$ converges to $x_{o}$ in $\mathcal{X}$
then it is $\mathcal{I}$-convergent to the same limit.
\end{thm}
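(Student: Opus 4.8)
The plan is to exploit the single structural fact that an admissible ideal contains every finite subset of $\mathbb{N}$, so that the ``exceptional set'' produced by ordinary convergence, being finite, automatically lands in $\mathcal{I}$. The argument is therefore a direct unwinding of the two definitions.

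First I would fix an arbitrary $\epsilon > 0$ and invoke the hypothesis that $\{x_{n}\}_{n\in \mathbb{N}}$ converges to $x_{o}$ in $(\mathcal{X},\delta)$. By the definition of convergence in a metric-like space, there exists $k_{o} \in \mathbb{N}$ such that $\left|\delta(x_{n},x_{o}) - \delta(x_{o},x_{o})\right| < \epsilon$ for every $n \geq k_{o}$. Next I would observe that this forces the set $\mathcal{A}(\epsilon) = \{n\in \mathbb{N}: \left|\delta(x_{n},x_{o}) - \delta(x_{o},x_{o})\right| \geq \epsilon\}$ to be contained in $\{1, 2, \ldots, k_{o}-1\}$, and hence to be finite.

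It then remains to convert finiteness into membership in $\mathcal{I}$. Since $\mathcal{I}$ is admissible it contains every singleton $\{z\}$, and being closed under finite unions it therefore contains every finite subset of $\mathbb{N}$; in particular $\mathcal{A}(\epsilon) \in \mathcal{I}$. As $\epsilon > 0$ was arbitrary, this is precisely the defining condition for $\{x_{n}\}_{n\in \mathbb{N}}$ to be $\mathcal{I}$-convergent to $x_{o}$, so $\mathcal{I}-\displaystyle{\lim_{n\rightarrow \infty}} x_{n} = x_{o}$.

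There is essentially no genuine obstacle in this proof; the only point requiring explicit mention is the passage from admissibility (all singletons lie in $\mathcal{I}$) and closure under finite unions to the conclusion that \emph{every} finite set belongs to $\mathcal{I}$. That observation is what turns the finiteness of $\mathcal{A}(\epsilon)$ into membership in the ideal, and it is also the reason the hypothesis that $\mathcal{I}$ be admissible (rather than merely non-trivial) is used.
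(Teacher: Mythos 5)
Your proof is correct and is exactly the standard argument the paper has in mind (the paper itself omits the proof as straightforward): ordinary convergence makes the exceptional set $\mathcal{A}(\epsilon)$ finite, and admissibility plus closure under finite unions places every finite set in $\mathcal{I}$. Nothing is missing.
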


\begin{proof}
Proof is straightforward so is omitted.
\end{proof}

\begin{note}
The converse of the Theorem \ref{ Theo 1} is not true. 
\end{note}

\begin{thm} \label{ Theo 2}
Let $(\mathcal{X},\delta)$ be a metric-like space and $\{x_{n}\}_{n\in \mathbb{N}}$ be a sequence in $\mathcal{X}$.
If $\{x_{n}\}_{n\in \mathbb{N}}$ is $\mathcal{I}$-convergent to $x_{o} \in \mathcal{X}$, then there exists a subsequence
of $\{x_{n}\}_{n\in \mathbb{N}}$, which converges to $x_{o}$ in usual sense.
\end{thm}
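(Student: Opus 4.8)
The plan is to exploit the largeness of the filter sets attached to $\mathcal{I}$ in order to thin out the original sequence along indices where the scalar error $|\delta(x_n,x_o)-\delta(x_o,x_o)|$ is controlled more and more tightly. For each $m\in\mathbb{N}$ I would set
\begin{align*}
\mathcal{A}_m=\{n\in\mathbb{N}:|\delta(x_n,x_o)-\delta(x_o,x_o)|\geq 1/m\},\qquad \mathcal{B}_m=\mathbb{N}\setminus\mathcal{A}_m.
\end{align*}
By the definition of $\mathcal{I}$-convergence we have $\mathcal{A}_m\in\mathcal{I}$, and hence $\mathcal{B}_m\in\mathcal{F(I)}$, for every $m$. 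Two structural observations drive the argument. First, since $1/(m+1)\leq 1/m$ the sets are nested, $\mathcal{A}_1\subseteq\mathcal{A}_2\subseteq\cdots$ and correspondingly $\mathcal{B}_1\supseteq\mathcal{B}_2\supseteq\cdots$. Second, each $\mathcal{B}_m$ is infinite: were some $\mathcal{B}_m$ finite, then admissibility of $\mathcal{I}$ would give $\mathcal{B}_m\in\mathcal{I}$, and together with $\mathcal{A}_m\in\mathcal{I}$ this would force $\mathbb{N}=\mathcal{A}_m\cup\mathcal{B}_m\in\mathcal{I}$, contradicting the non-triviality of $\mathcal{I}$.

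Next I would construct the index sequence recursively. Choose $n_1\in\mathcal{B}_1$ arbitrarily; having selected $n_1<n_2<\cdots<n_{k-1}$, pick $n_k\in\mathcal{B}_k$ with $n_k>n_{k-1}$, which is possible precisely because $\mathcal{B}_k$ is infinite. This produces a strictly increasing sequence $n_1<n_2<\cdots$, so $\{x_{n_k}\}_{k\in\mathbb{N}}$ is a genuine subsequence of $\{x_n\}_{n\in\mathbb{N}}$.

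Finally I would verify that $\{x_{n_k}\}$ converges to $x_o$ in the usual sense of Definition in a metric-like space. Given $\epsilon>0$, fix $m$ with $1/m<\epsilon$. For every $k\geq m$ the construction gives $n_k\in\mathcal{B}_k$, hence
\begin{align*}
|\delta(x_{n_k},x_o)-\delta(x_o,x_o)|<\tfrac{1}{k}\leq\tfrac{1}{m}<\epsilon,
\end{align*}
so $x_{n_k}\in\mathcal{B}_\delta(x_o;\epsilon)$ for all $k\geq m$, which is exactly ordinary convergence to $x_o$. The only genuinely nontrivial point in the whole argument is the infiniteness of each filter set $\mathcal{B}_m$, which is what guarantees that the indices $n_k$ can be chosen strictly increasing and thus tending to infinity; this is where admissibility (equivalently, non-triviality) of $\mathcal{I}$ is essential, and everything else is routine.
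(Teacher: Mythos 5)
Your proposal is correct and follows essentially the same route as the paper: your sets $\mathcal{B}_m$ are exactly the paper's nested sets $A_j=\{n:|\delta(x_n,x_o)-\delta(x_o,x_o)|<1/j\}$, and the recursive selection of a strictly increasing index sequence $n_k\in\mathcal{B}_k$ with the final $1/k\leq 1/m<\epsilon$ estimate is identical. Your explicit justification that each $\mathcal{B}_m$ is infinite (via admissibility and non-triviality of $\mathcal{I}$) is a welcome detail that the paper merely asserts.
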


\begin{proof}
For every $j \in \mathbb{N}$ we let $A_{j} = \{n\in \mathbb{N}: \left|\delta(x_{n},x_{o}) - \delta(x_{o},x_{o})\right|
< \frac{1}{j}\}$. As $\mathcal{I}-\displaystyle{\lim_{n\rightarrow \infty}} \delta(x_{n},x_{o}) = \delta(x_{o},x_{o})$
and $\mathcal{I}$ is an admissible ideal of $\mathbb{N}$, so $A_{j}~'s$ are infinite subset of $\mathbb{N}$ and
$A_{1} \supset A_{2} \supset A_{3} \ldots$.

Let $n_{1}$ be the least element of $A_{1}$. As $A_{2}$ is an infinite set, so there exists $n_{2} \in A_{2}$ with 
$n_{1} < n_{2}$. Similarly there exists $n_{3} \in A_{3}$ such that $n_{1} < n_{2} < n_{3}$. Proceeding in this way
we will obtained a strictly increasing sequence $\{n_{k}\}_{k\in \mathbb{N}}$ of natural numbers such that $\{x_{n_{k}}\}
_{k\in \mathbb{N}}$ is a subsequence of $\{x_{n}\}_{n\in \mathbb{N}}$ and 
\begin{align} \label{ eq 3}
\left|\delta(x_{n_{k}},x_{o}) - \delta(x_{o},x_{o})\right| < \frac{1}{k}, ~\forall~ k \in \mathbb{N}.
\end{align}
Let $\epsilon > 0$ be given. Then there exists $k_{o} \in \mathbb{N}$ such that $\frac{1}{k_{o}} < \epsilon$. So
from (\ref{ eq 3}) we have,
\begin{align*}
& \left|\delta(x_{n_{k}},x_{o}) - \delta(x_{o},x_{o})\right| < \frac{1}{k} \leq \frac{1}{k_{o}} < \epsilon,
~\forall~ k \geq k_{o} \\
\Rightarrow~ & \displaystyle{\lim_{k\rightarrow \infty}} \delta(x_{n_{k}},x_{o}) = \delta(x_{o},x_{o}) \\
\Rightarrow~ & \displaystyle{\lim_{k\rightarrow \infty}} x_{n_{k}} = x_{o}.
\end{align*}
\end{proof}

\begin{rem}
The converse of the Theorem \ref{ Theo 2} is not true. 
\end{rem}

Now a natural question arises, under which condition the converse of the Theorem \ref{ Theo 2} holds.
The following theorem gives an affirmative answer of this question.

\begin{thm} \label{ Theo 2.A}
Let $(\mathcal{X},\delta)$ be a metric-like space and $\{x_{n}\}_{n\in \mathbb{N}}$ be a sequence in $\mathcal{X}$.
If each subsequence of $\{x_{n}\}_{n\in \mathbb{N}}$ has a subsequence, which is $\mathcal{I}$-converges to $x_{o}
\in \mathcal{X}$, then the sequence $\{x_{n}\}_{n\in \mathbb{N}}$ is $\mathcal{I}$-converges to $x_{o}$.
\end{thm}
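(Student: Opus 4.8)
The plan is to argue by contradiction, exploiting the fact that an admissible ideal contains every finite set but not $\mathbb{N}$ itself. Suppose, to the contrary, that $\{x_{n}\}_{n\in \mathbb{N}}$ is \emph{not} $\mathcal{I}$-convergent to $x_{o}$. Then there exists some $\epsilon_{0} > 0$ for which the set $\mathcal{A}(\epsilon_{0}) = \{n\in \mathbb{N}: \left|\delta(x_{n},x_{o}) - \delta(x_{o},x_{o})\right| \geq \epsilon_{0}\}$ fails to belong to $\mathcal{I}$. Since $\mathcal{I}$ is admissible, it contains every finite subset of $\mathbb{N}$; hence $\mathcal{A}(\epsilon_{0}) \notin \mathcal{I}$ forces $\mathcal{A}(\epsilon_{0})$ to be an infinite set, which is exactly what is needed to extract a subsequence indexed by its elements.

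First I would enumerate $\mathcal{A}(\epsilon_{0})$ in increasing order as $m_{1} < m_{2} < m_{3} < \cdots$ and set $y_{k} = x_{m_{k}}$, so that $\{y_{k}\}_{k\in \mathbb{N}}$ is a subsequence of $\{x_{n}\}_{n\in \mathbb{N}}$. By the very definition of $\mathcal{A}(\epsilon_{0})$, every term of this subsequence is ``far'' from $x_{o}$, in the sense that $\left|\delta(y_{k},x_{o}) - \delta(x_{o},x_{o})\right| \geq \epsilon_{0}$ for all $k \in \mathbb{N}$. The crucial observation is that this property is inherited by every further subsequence: if $\{y_{k_{j}}\}_{j\in \mathbb{N}}$ is any subsequence of $\{y_{k}\}_{k\in \mathbb{N}}$, then $\left|\delta(y_{k_{j}},x_{o}) - \delta(x_{o},x_{o})\right| \geq \epsilon_{0}$ for all $j \in \mathbb{N}$ as well.

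Next I would invoke the hypothesis applied to the subsequence $\{y_{k}\}_{k\in \mathbb{N}}$: it must possess a subsequence $\{y_{k_{j}}\}_{j\in \mathbb{N}}$ that is $\mathcal{I}$-convergent to $x_{o}$. Reading this $\mathcal{I}$-convergence with the particular choice $\epsilon = \epsilon_{0}$, the index set $\{j\in \mathbb{N}: \left|\delta(y_{k_{j}},x_{o}) - \delta(x_{o},x_{o})\right| \geq \epsilon_{0}\}$ must lie in $\mathcal{I}$. But by the persistence of the ``far'' inequality noted above, this index set is precisely all of $\mathbb{N}$, whence $\mathbb{N} \in \mathcal{I}$, contradicting the non-triviality of the admissible ideal $\mathcal{I}$. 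This contradiction shows that $\mathcal{A}(\epsilon) \in \mathcal{I}$ for every $\epsilon > 0$, i.e. $\{x_{n}\}_{n\in \mathbb{N}}$ is $\mathcal{I}$-convergent to $x_{o}$.

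I expect the main obstacle to be conceptual rather than computational: one must keep careful track of the index sets when passing from the sequence to a subsequence and then to a sub-subsequence, and recognize that the $\mathcal{I}$-convergence of a subsequence is measured against the ideal $\mathcal{I}$ on the relabelled index set of that subsequence. The heart of the argument is the realization that the inequality $\geq \epsilon_{0}$ holds at \emph{every} index along the chosen subsequence, so that the set witnessing $\mathcal{I}$-convergence degenerates to the whole of $\mathbb{N}$ and thereby violates the non-triviality of $\mathcal{I}$.
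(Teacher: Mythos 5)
Your proposal is correct and follows essentially the same route as the paper's proof: assume non-$\mathcal{I}$-convergence, use admissibility to see that $\mathcal{A}(\epsilon_{0})$ is infinite, form the subsequence indexed by it, observe that every further subsequence stays $\epsilon_{0}$-far from $x_{o}$ so its exceptional index set is all of $\mathbb{N}$, and contradict the non-triviality of $\mathcal{I}$. If anything, your version is slightly cleaner in that it explicitly invokes the stated hypothesis (every subsequence has an $\mathcal{I}$-convergent sub-subsequence), whereas the paper's write-up restates the hypothesis in a stronger form before running the identical argument.
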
 

\begin{proof}
Let $\{x_{n}\}_{n\in \mathbb{N}}$ be a sequence in $\mathcal{X}$ such that each subsequence of
$\{x_{n}\}_{n\in \mathbb{N}}$ is $\mathcal{I}$-convergent to $x_{o} \in \mathcal{X}$.
If possible, let $\{x_{n}\}_{n\in \mathbb{N}}$ be not $\mathcal{I}$-convergent to $x_{o}$.
Then there exists $\epsilon_{o} > 0$ such that 
\begin{align*}
\mathcal{A}(\epsilon_{o}) = \{n\in \mathbb{N}: \left|\delta(x_{n},x_{o}) - \delta(x_{o},x_{o})\right| \geq \epsilon_{o}\}
\notin \mathcal{I}.
\end{align*}
As $\mathcal{I}$ is an admissible ideal of $\mathbb{N}$, so $\mathcal{A}(\epsilon_{o})$ must be an infinite
subset of $\mathbb{N}$. We write $\mathcal{A}(\epsilon)$ as $\mathcal{A}(\epsilon_{o}) = \{n_{1} < n_{2} < \ldots\}$. 
Then $\{x_{n_{k}}\}_{k\in \mathbb{N}}$ is a subsequence of $\{x_{n}\}_{n\in \mathbb{N}}$ and
\begin{align} \label{ eq 4}
\left|\delta(x_{n_{k}},x_{o}) - \delta(x_{o},x_{o})\right| \geq \epsilon_{o}, ~\forall~ k \in \mathbb{N}.
\end{align}
Let $y_{p} = x_{n_{k_{p}}}, ~\forall~ p \in \mathbb{N}$. Then $\{y_{p}\}_{p\in \mathbb{N}}$ is a subsequence of
$\{x_{n_{k}}\}_{k\in \mathbb{N}}$ and so from (\ref{ eq 4}) we have,
\begin{align*}
\{p \in \mathbb{N}: \left|\delta(y_{p},x_{o}) - \delta(x_{o},x_{o})\right| \geq \epsilon_{o}\} = \mathbb{N}.
\end{align*} 
So, $\{p\in \mathbb{N}: \left|\delta(y_{p},x_{o}) - \delta(x_{o},x_{o})\right| \geq \epsilon_{o}\} \notin \mathcal{I}$,
as $\mathcal{I}$ is an admissible ideal. This implies that $\{y_{p}\}_{p\in \mathbb{N}}$ is not $\mathcal{I}$-convergent
to $x_{o}$, which contradicts our hypothesis. Therefore $\{x_{n}\}_{n\in \mathbb{N}}$ must be 
$\mathcal{I}$-convergent to $x_{o}$.
\end{proof}

\begin{rem}
Converse of the Theorem \ref{ Theo 2.A} is not true. 
\end{rem}


\section{ \bf{$\mathcal{I}^{*}$-convergence } }

In this section we introduce and study the notion of $\mathcal{I}^{*}$-convergence in a metric-like
space. For this study we need some basic topological properties of a metric-like space as introduced below.

\begin{defn}
Let $(\mathcal{X},\delta)$ be a metric-like space and $A$ be a non-empty subset of $\mathcal{X}$. A
point $x_{o} \in \mathcal{X}$ is said to be a limit point of $A$, if for each $r > 0$,
\begin{align*}
\mathcal{B}_{\delta}(x_{o};r) \cap (A - \{x_{o}\}) \neq \phi.
\end{align*}
\end{defn}

\begin{defn}
A point $x_{o}$ in a metric-like space $(\mathcal{X},\delta)$ is said to be an isolated point
of $\mathcal{X}$, if there exists $r_{o} > 0$ such that
\begin{align*}
\mathcal{B}_{\delta}(x_{o};r_{o}) = \{x_{o}\}.
\end{align*}
\end{defn}

\begin{defn}
A metric-like space $(\mathcal{X},\delta)$ is said to be a $T_{0}$ metric-like space or is said to
satisfy the $T_{0}$ axiom, if for every pair of distinct points in $\mathcal{X}$, there exists an
open $\delta$-ball, which contains one of the points but not the other.
\end{defn}

Now we introduce the notion of $\mathcal{I}^{*}$-convergence in a metric-like space.

\begin{defn}
Let $\mathcal{I}$ be an admissible ideal of subsets of $\mathbb{N}$. A sequence $\{x_{n}\}_{n\in \mathbb{N}}$ of
points in a metric-like space $(\mathcal{X},\delta)$ is said to be $\mathcal{I}^{*}$-convergent to $x_{o} \in 
\mathcal{X}$ if there exists $M = \{n_{1} < n_{2} < \ldots\} \in \mathcal{F(I)}$ such that $\displaystyle{\lim
_{k\rightarrow \infty}} \delta(x_{n_{k}},x_{o}) = \delta(x_{o},x_{o})$. 

In this case we write $\mathcal{I}^{*}-\displaystyle{\lim_{n\rightarrow \infty}} x_{n} = x_{o}$ and $x_{o}$ is called
an $\mathcal{I}^{*}$-limit of $\{x_{n}\}_{n\in \mathbb{N}}$.
\end{defn}

\begin{thm} \label{ Theo 4}
Let $\mathcal{I}$ be an admissible ideal in $\mathbb{N}$, $\{x_{n}\}_{n\in \mathbb{N}}$ be a 
sequence in a metric-like space $(\mathcal{X},\delta)$ and $x_{o} \in \mathcal{X}$. If 
$\mathcal{I}^{*}-\displaystyle{\lim_{n\rightarrow \infty}} x_{n} = x_{o}$, 
then $\mathcal{I}-\displaystyle{\lim_{n\rightarrow \infty}} x_{n} = x_{o}$.
\end{thm}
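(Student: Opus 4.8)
The plan is to unwind the two definitions and show that $\mathcal{I}^{*}$-convergence supplies exactly the filter-set that $\mathcal{I}$-convergence demands. By hypothesis there is a set $M = \{n_{1} < n_{2} < \ldots\} \in \mathcal{F(I)}$ along which the real sequence $\delta(x_{n_{k}},x_{o})$ converges (in the usual sense) to $\delta(x_{o},x_{o})$. Writing $H = \mathbb{N} - M$, the definition of the associated filter gives $H \in \mathcal{I}$. I would fix an arbitrary $\epsilon > 0$ and aim to show that the set $\mathcal{A}(\epsilon) = \{n \in \mathbb{N} : |\delta(x_{n},x_{o}) - \delta(x_{o},x_{o})| \geq \epsilon\}$ lies in $\mathcal{I}$.

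First I would use the ordinary convergence along $M$: there exists $k_{o} \in \mathbb{N}$ such that $|\delta(x_{n_{k}},x_{o}) - \delta(x_{o},x_{o})| < \epsilon$ for all $k \geq k_{o}$. The key observation is then that any index $n \in \mathcal{A}(\epsilon)$ must either lie outside $M$ (i.e. in $H$) or be one of the finitely many early terms $n_{1}, n_{2}, \ldots, n_{k_{o}-1}$ of $M$. Hence
\begin{align*}
\mathcal{A}(\epsilon) \subset H \cup \{n_{1}, n_{2}, \ldots, n_{k_{o}-1}\}.
\end{align*}
Since $H \in \mathcal{I}$ and the finite set $\{n_{1}, \ldots, n_{k_{o}-1}\}$ belongs to $\mathcal{I}$ because $\mathcal{I}$ is admissible (every singleton is in $\mathcal{I}$, and $\mathcal{I}$ is closed under finite unions), their union lies in $\mathcal{I}$.

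To finish, I would invoke the hereditary property of the ideal (condition $(iii)$ in the definition of an ideal): a subset of a member of $\mathcal{I}$ is again a member of $\mathcal{I}$. Therefore $\mathcal{A}(\epsilon) \in \mathcal{I}$. As $\epsilon > 0$ was arbitrary, this is precisely the statement that $\mathcal{I}-\lim_{n\rightarrow \infty} \delta(x_{n},x_{o}) = \delta(x_{o},x_{o})$, i.e. $\mathcal{I}-\lim_{n\rightarrow \infty} x_{n} = x_{o}$.

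The argument is essentially routine bookkeeping; the only point requiring a little care — and the step I expect to be the main obstacle — is the containment $\mathcal{A}(\epsilon) \subset H \cup \{n_{1}, \ldots, n_{k_{o}-1}\}$. One must argue it correctly: an index $n$ belongs either to $M$ or to its complement $H$, and if $n = n_{k} \in M$ with $k \geq k_{o}$ then the usual-convergence estimate forces $n \notin \mathcal{A}(\epsilon)$, so the only elements of $\mathcal{A}(\epsilon) \cap M$ are among the initial block $n_{1}, \ldots, n_{k_{o}-1}$. Once this inclusion is pinned down, admissibility and the ideal axioms do the rest. Note that this direction uses only that $\mathcal{I}$ is admissible and is valid in any metric-like space without further hypotheses; the converse implication (which presumably follows and requires condition (AP)) is where the real work lies.
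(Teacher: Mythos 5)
Your proof is correct and complete; the paper simply omits this proof as ``straightforward,'' and the argument you give --- covering $\mathcal{A}(\epsilon)$ by $H\cup\{n_{1},\ldots,n_{k_{o}-1}\}$ and then using admissibility, finite unions, and the hereditary property of the ideal --- is precisely the standard one the authors have in mind. No gaps.
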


\begin{proof}
Proof is straightforward, so is omitted
\end{proof}

\begin{rem}
The converse of the above theorem is not true in general, it depends on the structure of the metric-like
space as seen in the subsequent results.
\end{rem}

\begin{thm} \label{ Theo 5}
Let $(\mathcal{X},\delta)$ be a $T_{0}$ metric-like space and $x_{o}$ be a limit point of $\mathcal{X}$.
Then there exists an admissible ideal $\mathcal{I}$ and a sequence in $\mathcal{X}$ so
that the sequence is $\mathcal{I}$-convergent to $x_{o}$ but is not $\mathcal{I}^{*}$-convergent
to $x_{o}$. 
\end{thm}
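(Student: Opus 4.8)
The plan is to build both the ideal and the sequence explicitly, reusing the decomposition ideal from Example~\ref{ Eaxmp 1}. First I would fix a partition $\mathbb{N}=\bigcup_{j=1}^{\infty}\mathcal{D}_{j}$ into pairwise disjoint infinite sets and set $\mathcal{I}=\{A\subset\mathbb{N}: A \text{ meets only finitely many } \mathcal{D}_{j}\}$, which is an admissible ideal. The point of this choice is the resulting description of its filter: $M\in\mathcal{F(I)}$ precisely when $\mathbb{N}-M$ meets only finitely many blocks, i.e. when $M$ contains all but finitely many of the $\mathcal{D}_{j}$ in their entirety. It is this property of $\mathcal{F(I)}$ that will block $\mathcal{I}^{*}$-convergence.

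Next I would use the hypotheses on $x_{o}$ to choose the values of the sequence. Since $x_{o}$ is a limit point of $\mathcal{X}$, for each $j$ the set $\mathcal{B}_{\delta}(x_{o};1/j)\cap(\mathcal{X}-\{x_{o}\})$ is non-empty, so I can pick $y_{j}$ in it, giving $|\delta(y_{j},x_{o})-\delta(x_{o},x_{o})|<1/j$. The $T_{0}$ axiom is what I would lean on to guarantee that the deviation $d_{j}:=|\delta(y_{j},x_{o})-\delta(x_{o},x_{o})|$ is strictly positive: separating $x_{o}$ from $y_{j}$ by an open $\delta$-ball is meant to rule out the degenerate case $\delta(y_{j},x_{o})=\delta(x_{o},x_{o})$. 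I then define $\{x_{n}\}_{n\in\mathbb{N}}$ by $x_{n}=y_{j}$ for $n\in\mathcal{D}_{j}$, so that $0<d_{j}<1/j$ and $d_{j}\to 0$.

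The $\mathcal{I}$-convergence of this sequence to $x_{o}$ is then the routine half. For fixed $\epsilon>0$ the set $\{n\in\mathbb{N}:|\delta(x_{n},x_{o})-\delta(x_{o},x_{o})|\ge\epsilon\}$ is the union of those blocks $\mathcal{D}_{j}$ with $d_{j}\ge\epsilon$, and since $d_{j}\to 0$ only finitely many blocks qualify; this set therefore meets finitely many $\mathcal{D}_{j}$ and lies in $\mathcal{I}$, which is exactly $\mathcal{I}$-convergence to $x_{o}$.

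The real content is the failure of $\mathcal{I}^{*}$-convergence, and this is where I expect the main difficulty. Suppose, for contradiction, that some $M=\{n_{1}<n_{2}<\cdots\}\in\mathcal{F(I)}$ satisfied $\lim_{k}\delta(x_{n_{k}},x_{o})=\delta(x_{o},x_{o})$. By the description of $\mathcal{F(I)}$ above, $M$ contains some block $\mathcal{D}_{j_{*}}$ entirely; as $\mathcal{D}_{j_{*}}$ is infinite, the subsequence along $M$ has infinitely many terms equal to $y_{j_{*}}$ at arbitrarily large indices, each contributing the fixed value $d_{j_{*}}$ to the deviation. Since $d_{j_{*}}>0$, the numbers $\delta(x_{n_{k}},x_{o})$ cannot converge to $\delta(x_{o},x_{o})$, a contradiction. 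The crux of the whole argument is the selection step: I need points that are at once $\delta$-close to $x_{o}$ (to force $\mathcal{I}$-convergence through $d_{j}\to 0$) yet carry a strictly positive deviation (to keep the per-block gap from collapsing), and it is the $T_{0}$ axiom, rather than the limit-point property by itself, that has to supply this positivity.
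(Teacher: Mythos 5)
Your proposal is correct and follows essentially the same route as the paper: the same block-decomposition ideal from Example~\ref{ Eaxmp 1}, the same choice of points $y_{j}\in\mathcal{B}_{\delta}(x_{o};1/j)-\{x_{o}\}$ assigned constantly on each block, the same use of the $T_{0}$ axiom to force the per-block deviation to be strictly positive, and the same contradiction obtained from a block wholly contained in a set of $\mathcal{F(I)}$. The only difference is cosmetic: you assert positivity of every $d_{j}$ up front, while the paper establishes it only for the one block $\mathcal{D}_{p}$ that survives into the filter set.
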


\begin{proof}
Since $x_{o}$ is a limit point of $\mathcal{X}$, so for each $j\in \mathbb{N}, ~\mathcal{B}_{\delta}(x_{o};\frac{1}{j})$
contains infinitely many points other than $x_{o}$. For each $j\in \mathbb{N}$ we choose $x_{j}\in \mathcal{B}_{\delta}
(x_{o};\frac{1}{j}) - \{x_{o}\}$. Then
\begin{align} \label{ eq 9}
\left|\delta(x_{j},x_{o}) - \delta(x_{o},x_{o})\right| < \frac{1}{j}, ~\forall~ j \in \mathbb{N}.
\end{align}
Let $\epsilon > 0$ be given. Then there exists $j_{o} \in \mathbb{N}$ such that $\frac{1}{j_{o}} < \epsilon$.
So from (\ref{ eq 9}) we have
\begin{align} \label{ eq 10}
& \left|\delta(x_{j},x_{o}) - \delta(x_{o},x_{o})\right| < \frac{1}{j} \leq \frac{1}{j_{o}} < \epsilon, ~\forall~ 
j \geq j_{o} \\
\Rightarrow~ & \displaystyle{\lim_{j\rightarrow \infty}} \delta(x_{j},x_{o}) = \delta(x_{o},x_{o}) \nonumber
\end{align}
Now we consider the decomposition of $\mathbb{N}$ and the ideal $\mathcal{I}$ as described in Example \ref{ Eaxmp 1}.

Now we define a sequence $\{y_{n}\}_{n \in \mathbb{N}}$ in $\mathcal{X}$ as follows:
\begin{align*}
y_{n} = x_{j}, ~\mbox{if}~ n \in \mathcal{D}_{j}, ~ j \in \mathbb{N}.
\end{align*}
Then from (\ref{ eq 10}) we have, $\forall~ n \in \mathcal{D}_{j}$ 
\begin{align*}
\left|\delta(y_{n},x_{o}) - \delta(x_{o},x_{o})\right| < \epsilon, ~\forall~ j \geq j_{o}.
\end{align*}
This implies $\{n\in \mathbb{N}: \left|\delta(y_{n},x_{o}) - \delta(x_{o},x_{o})\right| \geq \epsilon\} \subset
\bigcup^{j_{o}-1}_{j=1}\mathcal{D}_{j}$ and so $\{n\in \mathbb{N}: \left|\delta(y_{n},x_{o}) - \delta(x_{o},x_{o})\right|
\geq \epsilon\} \in \mathcal{I}$. Therefore $\mathcal{I}-\displaystyle{\lim_{n\rightarrow \infty}} \delta(y_{n},x_{o}) = 
\delta(x_{o},x_{o})$ and hence $\mathcal{I}-\displaystyle{\lim_{n\rightarrow \infty}} y_{n} = y_{o}$.

If possible, let $\mathcal{I}^{*}-\displaystyle{\lim_{n\rightarrow \infty}} x_{n} = x_{o}$. Then $\mathcal{I}^{*}-
\displaystyle{\lim_{n\rightarrow \infty}} \delta(x_{n},x_{o}) = \delta(x_{o},x_{o})$. So there exists $H \in \mathcal{I}$
such that $\mathcal{M} = \mathbb{N} - H = \{n_{1} < n_{2} < \ldots\} \in \mathcal{F(I)}$ and
\begin{align} \label{ eq 11}
\displaystyle{\lim_{n\rightarrow \infty}} \delta(y_{n_{k}},x_{o}) = \delta(x_{o},x_{o}).
\end{align}
Now $H \in \mathcal{I}$, so there exists $p \in \mathbb{N}$ such that $H \cap \mathcal{D}_{p} = \phi$. This
implies $\mathcal{D}_{p} \subset \mathcal{M}$.

Let $r_{p} = \left|\delta(x_{p},x_{o}) - \delta(x_{o},x_{o})\right|$. As $(\mathcal{X},\delta)$ is $T_{0}$,
so there must exists $\epsilon_{o} > 0$ such that $x_{p} \notin \mathcal{B}_{\delta}(x_{o};\epsilon_{o})$ and
hence $r_{p} > 0$. Again from (\ref{ eq 11}) we have, there exists $k_{o} \in \mathbb{N}$ such that
\begin{align} \label{ eq 12}
\left|\delta(y_{n_{k}},x_{o}) - \delta(x_{o},x_{o})\right| < r_{p}, ~\forall~ k \geq k_{o}.
\end{align}
As $\mathcal{D}_{p}$ is an infinite subset of $\mathbb{N}$, there must exists $k_{1} \in \mathbb{N}$ such that
$k_{1} > k_{o}$ and $n_{k_{1}} \in \mathcal{D}_{p} \cap \mathcal{M}$. So from (\ref{ eq 12}) we have
\begin{align*}
& \left|\delta(y_{n_{k_{1}}},x_{o}) - \delta(x_{o},x_{o})\right| < r_{p} \\
\Rightarrow~ & \left|\delta(x_{p},x_{o}) - \delta(x_{o},x_{o})\right| < r_{p} \\
\Rightarrow~ & r_{p} < r_{p}, ~\mbox{a contradiction}.
\end{align*}
So the sequence $\{x_{n}\}_{n\in \mathbb{N}}$ is not $\mathcal{I}^{*}$-convergence to $x_{o}$. 
\end{proof}

\begin{thm}
Let $(\mathcal{X},\delta)$ be a metric-like space and $x_{o}$ be an isolated point of $\mathcal{X}$.
If for a sequence $\{x_{n}\}_{n\in \mathbb{N}}$ in $\mathcal{X}, ~\mathcal{I}-\displaystyle{\lim
_{n\rightarrow \infty}}x_{n} = x_{o}$, then $\mathcal{I}^{*}-\displaystyle{\lim_{n\rightarrow \infty}} x_{n} = x_{o}$.
\end{thm}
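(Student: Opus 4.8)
The plan is to exploit the isolation of $x_{o}$ to show that $\mathcal{I}$-convergence actually forces the sequence to be \emph{equal} to $x_{o}$ on a set belonging to the associated filter $\mathcal{F(I)}$, after which the required $\mathcal{I}^{*}$-limit follows almost for free. The whole argument hinges on the fact that an isolated point creates a uniform ``gap'': every point other than $x_{o}$ is separated from $x_{o}$ by a fixed positive amount.

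First I would invoke the definition of isolated point to obtain $r_{o} > 0$ with $\mathcal{B}_{\delta}(x_{o};r_{o}) = \{x_{o}\}$. Reading off the definition of the open $\delta$-ball, this says precisely that the only $x \in \mathcal{X}$ satisfying $\left|\delta(x,x_{o}) - \delta(x_{o},x_{o})\right| < r_{o}$ is $x = x_{o}$ itself. Next I would apply the hypothesis $\mathcal{I}-\displaystyle{\lim_{n\to\infty}}x_{n} = x_{o}$ with the particular choice $\epsilon = r_{o}$, which yields
\begin{align*}
\mathcal{A}(r_{o}) = \{n \in \mathbb{N}: \left|\delta(x_{n},x_{o}) - \delta(x_{o},x_{o})\right| \geq r_{o}\} \in \mathcal{I},
\end{align*}
so that $M := \mathbb{N} - \mathcal{A}(r_{o}) \in \mathcal{F(I)}$. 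The crucial observation is then that for each $n \in M$ we have $\left|\delta(x_{n},x_{o}) - \delta(x_{o},x_{o})\right| < r_{o}$, i.e. $x_{n} \in \mathcal{B}_{\delta}(x_{o};r_{o}) = \{x_{o}\}$, whence $x_{n} = x_{o}$ and therefore $\delta(x_{n},x_{o}) = \delta(x_{o},x_{o})$ exactly.

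Finally, writing $M = \{n_{1} < n_{2} < \ldots\}$, I would verify that $M$ is infinite: were it finite, then $\mathcal{A}(r_{o})$ would be cofinite and, together with the finite set $M \in \mathcal{I}$, would force $\mathbb{N} = \mathcal{A}(r_{o}) \cup M \in \mathcal{I}$, contradicting the non-triviality of $\mathcal{I}$. Along this infinite index set the quantity $\delta(x_{n_{k}},x_{o})$ is identically equal to $\delta(x_{o},x_{o})$, so that $\displaystyle{\lim_{k\to\infty}}\delta(x_{n_{k}},x_{o}) = \delta(x_{o},x_{o})$ holds trivially; since $M \in \mathcal{F(I)}$, this is exactly the assertion that $\mathcal{I}^{*}-\displaystyle{\lim_{n\to\infty}}x_{n} = x_{o}$. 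I do not anticipate any serious obstacle here; the argument is essentially a one-line reduction once the gap $r_{o}$ is in hand. The only point requiring a moment's care is confirming that $M$ is infinite, so that the limit along $\{n_{k}\}$ is meaningful, and it is precisely at this step that the non-triviality of the ideal is needed.
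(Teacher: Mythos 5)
Your argument is correct and is essentially the paper's own proof: both extract the isolation radius $r_{o}$, apply the $\mathcal{I}$-convergence hypothesis with $\epsilon = r_{o}$ to place $M = \{n : x_{n} = x_{o}\}$ in $\mathcal{F(I)}$, and observe that the resulting constant subsequence converges trivially. Your extra check that $M$ is infinite is a small but welcome addition that the paper leaves implicit.
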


\begin{proof}
Since $x_{o}$ be an isolated point of $\mathcal{X}$, so there exists $\epsilon_{o} > 0$ such that
$\mathcal{B}_{\delta}(x_{o};\epsilon_{o}) = \{x_{o}\}$. Let $\{x_{n}\}_{n\in \mathbb{N}}$ be a sequence 
in $\mathcal{X}$ such that
$\mathcal{I}-\displaystyle{\lim_{n\rightarrow \infty}} x_{n} = x_{o}$. Then
\begin{align*}
\mathcal{A}(\epsilon_{o}) = \{n\in \mathbb{N}: \left|\delta(x_{n},x_{o}) - \delta(x_{o},x_{o})\right| \geq 
\epsilon_{o}\} \in \mathcal{I}.
\end{align*}
As $\mathcal{I}$ is an admissible ideal in $\mathbb{N}$, so 
\begin{align*}
\mathcal{M} = \{n\in \mathbb{N}: \left|\delta(x_{n},x_{o}) - \delta(x_{o},x_{o})\right| < \epsilon_{o}\} = 
\{n\in \mathbb{N}: x_{n} = x_{o}\} \in \mathcal{F(I)}.
\end{align*}
Let $\mathcal{M} = \{n_{1} < n_{2} <\ldots\}$. Then
$\{x_{n_{k}}\}_{k\in \mathbb{N}} = \{x_{o}, x_{o},\ldots\}$ is a subsequence of $\{x_{n}\}_{n\in \mathbb{N}}$ and
\begin{align*}
& \displaystyle{\lim_{k\rightarrow \infty}} \delta(x_{n_{k}},x_{o}) = \delta(x_{o},x_{o}) \\
\mbox{i.e.}~ & \displaystyle{\lim_{k\rightarrow \infty}} x_{n_{k}} = x_{o}.
\end{align*} 
This implies $\mathcal{I}^{*}-\displaystyle{\lim_{n\rightarrow \infty}} x_{n} = x_{o}$.
\end{proof}

\begin{thm}
Let $(\mathcal{X},\delta)$ be a metric-like space and $\mathcal{I}$ be an admissible ideal satisfying \textit{(AP)} condition.
Then for a sequence $\{x_{n}\}_{n\in \mathbb{N}}$ in $\mathcal{X}$, if $\mathcal{I}-\displaystyle{\lim_{n\rightarrow
\infty}}x_{n} = x_{o}$, then $\mathcal{I}^{*}-\displaystyle{\lim_{n\rightarrow \infty}} x_{n} = x_{o}$.
\end{thm}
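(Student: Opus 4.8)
The plan is to reduce everything to the behaviour of the real sequence $a_n := \delta(x_n,x_o)$ with target $a := \delta(x_o,x_o)$, and then to run the classical argument showing that \textit{(AP)} upgrades $\mathcal{I}$-convergence to $\mathcal{I}^{*}$-convergence. By the definition of $\mathcal{I}$-convergence in $(\mathcal{X},\delta)$ the hypothesis says $\mathcal{I}-\lim_{n} a_n = a$, so for every $j\in\mathbb{N}$ the set
\[ A_j = \{ n\in\mathbb{N} : |a_n - a| \geq \tfrac{1}{j} \} \]
belongs to $\mathcal{I}$. These sets are \emph{nested}, $A_1 \subseteq A_2 \subseteq \cdots$, rather than disjoint, so they are not yet in the form demanded by \textit{(AP)}.

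First I would disjointify them: put $T_1 = A_1$ and $T_j = A_j \setminus A_{j-1}$ for $j\geq 2$. Then $\{T_j\}_{j\in\mathbb{N}}$ is a countable family of mutually disjoint sets, each contained in some $A_j\in\mathcal{I}$ and hence lying in $\mathcal{I}$, while $\bigcup_{i=1}^{j} T_i = A_j$ for every $j$. Applying \textit{(AP)} to $\{T_j\}$ produces sets $\{B_j\}_{j\in\mathbb{N}}$ with $T_j \Delta B_j$ finite for each $j$ and $B = \bigcup_{j=1}^{\infty} B_j \in \mathcal{I}$. I then set $M = \mathbb{N} - B$, which lies in $\mathcal{F(I)}$, and this is the candidate index set witnessing $\mathcal{I}^{*}$-convergence.

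The remaining step is to verify that $\lim_{k} a_{n_k} = a$ along $M = \{n_1 < n_2 < \cdots\}$. Fix $\epsilon > 0$ and choose $j$ with $\tfrac{1}{j} < \epsilon$. The key observation is that $\bigcup_{i=1}^{j} B_i$ differs from $\bigcup_{i=1}^{j} T_i = A_j$ by at most the finite set $\bigcup_{i=1}^{j}(T_i\Delta B_i)$; since each $n\in M$ avoids $B\supseteq \bigcup_{i=1}^{j} B_i$, only finitely many $n\in M$ can lie in $A_j$. Hence there is $N$ such that every $n\in M$ with $n > N$ satisfies $n\notin A_j$, that is $|a_n - a| < \tfrac{1}{j} < \epsilon$. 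This gives $\lim_{k} \delta(x_{n_k},x_o) = \delta(x_o,x_o)$ with $M\in\mathcal{F(I)}$, which is precisely $\mathcal{I}^{*}-\lim_n x_n = x_o$.

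The main obstacle, and the only place where \textit{(AP)} is genuinely used, is the passage from the nested family $\{A_j\}$ to a disjoint one: \textit{(AP)} is stated only for mutually disjoint sets, so the disjointification $T_j = A_j \setminus A_{j-1}$ is essential, after which one must translate the finiteness of each $T_i\Delta B_i$ back into the finiteness of $A_j\cap M$. I expect no difficulty from the space itself, since the metric-like structure enters only through the definition of $\mathcal{I}$-convergence, namely $\mathcal{I}-\lim_n \delta(x_n,x_o) = \delta(x_o,x_o)$; in particular neither uniqueness of limits nor any separation axiom is needed, so the argument proceeds exactly as in the classical real-sequence case.
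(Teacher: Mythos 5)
Your proof is correct and follows essentially the same route as the paper: your disjointified sets $T_j = A_j \setminus A_{j-1}$ are exactly the annular sets $\{n : \tfrac{1}{j} \leq |\delta(x_n,x_o)-\delta(x_o,x_o)| < \tfrac{1}{j-1}\}$ that the paper writes down directly before applying \textit{(AP)}, and the concluding finiteness argument along $M = \mathbb{N}-B$ matches the paper's step by step.
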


\begin{proof}
Let $\{x_{n}\}_{n\in \mathbb{N}}$ be a sequence in $\mathcal{X}$ and $x_{o} \in \mathcal{X}$ such that
$\mathcal{I}-\displaystyle{\lim_{n\rightarrow \infty}} x_{n} = x_{o}$. Then $\displaystyle{\lim_{n\rightarrow
\infty}} \delta(x_{n},x_{o}) = \delta(x_{o},x_{o})$. So for every $\eta > 0$, the set
$\mathcal{A}(\eta) = \{n\in \mathbb{N}: \left|\delta(x_{n},x_{o}) - \delta(x_{o},x_{o})\right| \geq \eta\}
\in \mathcal{I}$.

Let $A_{1} = \{n\in \mathbb{N}: \left|\delta(x_{n},x_{o}) - \delta(x_{o},x_{o})\right| \geq 1\}$ and for each 
$j\in \mathbb{N} ~\mbox{with}~ j \geq 2, ~A_{j} = \{n\in \mathbb{N}: \frac{1}{j} \leq \left|\delta(x_{n},x_{o}) - 
\delta(x_{o},x_{o})\right| < \frac{1}{j-1}\}$. Then $\{A_{j}\}_{j\in \mathbb{N}}$ is a countable family of 
mutually disjoint sets belonging to $\mathcal{I}$. As $\mathcal{I}$ satisfies \textit{(AP)} condition, 
there exists a countable collection of sets $\{B_{j}\}_{j\in \mathbb{N}}$ such that
$\left|A_{j} \Delta B_{j}\right| < \infty, ~\forall~ j \in \mathbb{N}$ and
$B = \bigcup^{\infty}_{j=1} B_{j} \in \mathcal{I}$. Let $\mathcal{M} = \mathbb{N} - B$. As $\mathcal{I}$ is an 
admissible ideal, so we have $\mathcal{M} \in \mathcal{F(I)}$ and $\mathcal{M}$ is an infinite subset of $\mathbb{N}$,
say $\mathcal{M} = \{n_{1} < n_{2} < \ldots\}$.

Let $\epsilon > 0$ be given. Then there exists $j_{o} \in \mathbb{N}$ such that $\frac{1}{j_{o}} < \epsilon$.
So for all $j \geq (j_{o} + 1)$ we have
\begin{align*}
& \left|\delta(x_{n},x_{o}) - \delta(x_{o},x_{o})\right| < \frac{1}{j-1} \leq \frac{1}{j_{o}} < \epsilon, ~\forall~
n \in A_{j} \\
\Rightarrow~ & \bigcup^{\infty}_{j=j_{o}+1} A_{j} \subset \{n \in \mathbb{N}: \left|\delta(x_{n},x_{o}) - \delta(x_{o},x_{o})
\right| < \epsilon\}.
\end{align*}
So, $\{n\in \mathbb{N}: \left|\delta(x_{n},x_{o}) - \delta(x_{o},x_{o})\right| \geq \epsilon\} \subset \bigcup
^{j_{o}}_{j=1} A_{j}$.

Now $\left|A_{j} \Delta B_{j}\right| < \infty$ for all $j = 1, 2, \ldots, j_{o}$. So there exists $n_{o} \in \mathbb{N}$
such that for all $n \in \mathbb{N}$ with $n > n_{o}$, we have $n \notin \bigcup^{j_{o}}_{j=1} (A_{j} \Delta B_{j})$.
Therefore if $n \in \mathbb{N}$ and $n > n_{o}$, then
\begin{align} \label{ eq 13}
n \in \bigcup^{j_{o}}_{j=1} B_{j} \Leftrightarrow n \in \bigcup^{j_{o}}_{j=1} A_{j}.
\end{align}

Let $k_{o} = k_{o}(\epsilon) \in \mathbb{N}$ be such that $n_{k} > n_{o}$ if $k \geq k_{o}$. Then for all
$k \geq k_{o}$ we have $n_{k} \notin B$ and so $n_{k} \notin \bigcup^{j_{o}}_{j=1} B_{j}$. Then from
(\ref{ eq 13}) we have $n_{k} \notin \bigcup^{j_{o}}_{j=1} A_{j}$. So we must have 
\begin{align*}
& \left|\delta(x_{n_{k}},x_{o}) - \delta(x_{o},x_{o})\right| < \frac{1}{j_{o}} < \epsilon \\
\mbox{i.e.}~ & \left|\delta(x_{n_{k}},x_{o}) - \delta(x_{o},x_{o})\right| < \epsilon, ~\forall~ k \geq k_{o}.
\end{align*}
This implies $\mathcal{I}^{*}-\displaystyle{\lim_{n\rightarrow \infty}} x_{n} = x_{o}$.
\end{proof}

\begin{thm}
If $x_{o}$ is a limit point of a $T_{0}$ metric-like space $(\mathcal{X},\delta)$ and if for any sequence $\{x_{n}\}_{n\in \mathbb{N}}$
in $\mathcal{X}, ~\mathcal{I}-\displaystyle{\lim_{n\rightarrow \infty}} x_{n} = x_{o}$ implies $\mathcal{I}^{*}-\displaystyle
{\lim_{n\rightarrow \infty}} x_{n} = x_{o}$, then $\mathcal{I}$ satisfies the property \textit{(AP)}.
\end{thm}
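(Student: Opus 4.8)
The plan is to verify the defining property \textit{(AP)} head-on: begin with an arbitrary countable collection $\{A_{j}\}_{j\in\mathbb{N}}$ of mutually disjoint sets in $\mathcal{I}$ and manufacture the required witnesses $\{B_{j}\}_{j\in\mathbb{N}}$ by exploiting the hypothesis at $x_{o}$. The engine of the argument will be a single carefully designed sequence whose $\mathcal{I}$-convergence to $x_{o}$ forces, through the assumed implication, an $\mathcal{I}^{*}$-limit at $x_{o}$; the index set coming from that $\mathcal{I}^{*}$-limit will be exactly what cuts the $A_{j}$'s down to the $B_{j}$'s.

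First I would exploit that $x_{o}$ is a limit point of $\mathcal{X}$: for each $j\in\mathbb{N}$ the ball $\mathcal{B}_{\delta}(x_{o};\tfrac{1}{j})$ contains points other than $x_{o}$, so, as in the proof of Theorem~\ref{ Theo 5}, I would select $x_{j}\in\mathcal{B}_{\delta}(x_{o};\tfrac{1}{j})-\{x_{o}\}$ and invoke the $T_{0}$ axiom to ensure that $r_{j}:=\left|\delta(x_{j},x_{o})-\delta(x_{o},x_{o})\right|>0$ while still $r_{j}<\tfrac{1}{j}$. I would then define a sequence by $y_{n}=x_{j}$ whenever $n\in A_{j}$, and $y_{n}=x_{o}$ for every $n$ belonging to no $A_{j}$. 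Given $\epsilon>0$, choosing $j_{o}$ with $\tfrac{1}{j_{o}}<\epsilon$ shows that $\left|\delta(y_{n},x_{o})-\delta(x_{o},x_{o})\right|\geq\epsilon$ can only happen for $n\in A_{1}\cup\cdots\cup A_{j_{o}-1}$, a finite union of members of $\mathcal{I}$; hence $\{n:\left|\delta(y_{n},x_{o})-\delta(x_{o},x_{o})\right|\geq\epsilon\}\in\mathcal{I}$ and $\mathcal{I}-\lim_{n\to\infty} y_{n}=x_{o}$.

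By hypothesis this forces $\mathcal{I}^{*}-\lim_{n\to\infty} y_{n}=x_{o}$, so there is $H\in\mathcal{I}$ with $\mathcal{M}=\mathbb{N}-H=\{m_{1}<m_{2}<\cdots\}\in\mathcal{F(I)}$ and $\lim_{k\to\infty}\delta(y_{m_{k}},x_{o})=\delta(x_{o},x_{o})$. The natural candidates are $B_{j}:=A_{j}\cap H$. Then $\bigcup_{j}B_{j}=\big(\bigcup_{j}A_{j}\big)\cap H\subseteq H\in\mathcal{I}$, so $B=\bigcup_{j}B_{j}\in\mathcal{I}$ by the hereditary property of the ideal; and since $B_{j}\subseteq A_{j}$ we get $A_{j}\Delta B_{j}=A_{j}\cap\mathcal{M}$. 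It then remains only to show each $A_{j}\cap\mathcal{M}$ is finite: were $A_{j}\cap\mathcal{M}$ infinite for some $j$, infinitely many $m_{k}$ would lie in $A_{j}$, along which $y_{m_{k}}=x_{j}$ is constant, giving $\left|\delta(y_{m_{k}},x_{o})-\delta(x_{o},x_{o})\right|=r_{j}$; but $\lim_{k\to\infty}\delta(y_{m_{k}},x_{o})=\delta(x_{o},x_{o})$ drives this quantity to $0$, contradicting $r_{j}>0$. Thus every $A_{j}\Delta B_{j}$ is finite and $\{B_{j}\}$ witnesses \textit{(AP)}.

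I expect the real obstacle to be the very first selection step, namely securing points $x_{j}$ near $x_{o}$ with $r_{j}>0$: the limit-point assumption only hands over points inside the balls, and it is precisely the $T_{0}$ separation that must be called upon (exactly as in Theorem~\ref{ Theo 5}) to exclude the possibility $\delta(x_{j},x_{o})=\delta(x_{o},x_{o})$, which would dissolve the final contradiction. Once those points are in hand, the remainder is routine ideal-theoretic bookkeeping.
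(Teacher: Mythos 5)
The paper offers no proof of this theorem (it is declared straightforward and omitted), so there is nothing to compare against line by line; your argument is the standard Kostyrko et al.\ converse transplanted to the metric-like setting, and it is essentially correct. The construction $y_{n}=x_{j}$ for $n\in A_{j}$ and $y_{n}=x_{o}$ otherwise, the verification that the exceptional set for $\epsilon$ lies in the finite union $A_{1}\cup\cdots\cup A_{j_{o}-1}$, the choice $B_{j}=A_{j}\cap H$ with $B=\bigcup_{j}B_{j}\subseteq H\in\mathcal{I}$, and the finiteness of $A_{j}\Delta B_{j}=A_{j}\cap\mathcal{M}$ forced by $\lim_{k\rightarrow\infty}\delta(y_{m_{k}},x_{o})=\delta(x_{o},x_{o})$ are all sound. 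The only step that deserves scrutiny is the one you yourself single out: the positivity of $r_{j}=\left|\delta(x_{j},x_{o})-\delta(x_{o},x_{o})\right|$. As literally stated, the paper's $T_{0}$ axiom only guarantees \emph{some} open $\delta$-ball containing one of $x_{j},x_{o}$ but not the other, and that ball need not be centred at $x_{o}$; a ball centred at $x_{j}$ or at a third point could do the separating even when $\delta(x_{j},x_{o})=\delta(x_{o},x_{o})$, so $r_{j}>0$ does not follow formally from the definition. However, this is exactly the inference the paper itself makes in the proof of Theorem \ref{ Theo 5} (``there must exist $\epsilon_{o}>0$ such that $x_{p}\notin\mathcal{B}_{\delta}(x_{o};\epsilon_{o})$''), so your proof is at the same level of rigour as the argument it parallels; under the reading the authors evidently intend --- that distinct points are separated by a ball centred at one of them, in particular that some $\mathcal{B}_{\delta}(x_{o};\epsilon_{o})$ excludes $x_{j}$ --- your proof is complete and is surely the intended one.
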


\begin{proof}
Proof is straightforward, so is omitted
\end{proof}


{\bf Acknowledgement : } The second author is grateful to the University Grants Commission, India for
financial support under UGC-JRF scheme during the preparation of this paper.


\end{document}